\newtheorem{theorem}{Theorem}[section]
\newtheorem{lemma}[theorem]{Lemma}
\newtheorem{coro}[theorem]{Corollary}
\newtheorem{fact}[theorem]{Fact}
\theoremstyle{definition}
\newtheorem{definition}[theorem]{Definition}
\newtheorem{example}[theorem]{Example}
\newtheorem{remark}[theorem]{Remark}
\theoremstyle{remark}
\numberwithin{equation}{section}
\DeclareMathOperator{\dom}{dom}
\DeclareMathOperator{\powerset}{\mathcal{P}}
\renewcommand{\i}{\mathcal}
\newcommand{\bs}{\omega^{\omega}}
\newcommand{\bt}{\omega^{<\omega}}
\newcommand{\xing}{^{*}}
\newcommand{\jia}{^{+}}
\newcommand{\ijia}{\mathcal{I}^{+}}
\newcommand{\is}{[\omega]^{\omega}}
\newcommand{\PI}{\mathbf{\Pi}}
\newcommand{\fin}{\mathrm{fin}}
\newcommand{\I}{\mathcal{I}}
\newcommand{\J}{\mathcal{J}}
\begin{document}

\title[Maximal eventually different families for ideals]{Maximal eventually different families for uniformly weak Ramsey ideals}

\author{Jialiang He}
\address{College of Mathematics, Sichuan University, Chengdu, Sichuan, 610064, China}
\email{jialianghe@scu.edu.cn}

\author{Jintao Luo}
\address{College of Mathematics, Sichuan University, Chengdu, Sichuan, 610064, China}
\email{jintaoluo@foxmail.com}

\author{David Schrittesser}
\address{Institute for Advanced Study in Mathematics, Harbin Institute of Technology, 92 West Da Zhi Street, Harbin City, Hei\-long\-jiang Province 150001, China, \emph{and}}

\address{Harbin Institute of Technology Suzhou Research Institute, Building K, 500 Nan Guandu Road, Suzhou City, Jiangsu Province 215104, China.}
\email{david@logic.univie.ac.at}

\author{Hang Zhang}
\address{School of Mathematics, Southwest Jiaotong University, Chengdu 610756, China}
\email{zhanghangzh@sina.com; hzhangzh@gmail.com}

\thanks{The  authors  are supported by Science and Technology Department of Sichuan
Province (project 2022ZYD0012 and 2023NSFSC1285) and NSFC (project 11601443).}
\subjclass[2010]{03E15; 03E05}

\keywords{}

\begin{abstract}
We study $\mathcal I$-maximal eventually different families of functions from the set of natural numbers into itself where $\mathcal I$ is an arbitrary ideal on the set of natural numbers that includes the ideal of all finite sets $\mathrm{fin}$. We introduce the class of uniformly weak Ramsey ideals and prove that there exists a closed $\mathcal I$-maximal eventually different family if $\mathcal I$ belongs to this class; this is the case for arbitrary $F_\sigma$ ideals and Fubini products $\mathrm{fin}^{\alpha}$ with $\alpha<\omega_1$.
\end{abstract}

\maketitle

\section{introduction}

Let $\I$ be an ideal  on $\omega$ (in the set theoretical sense; see Section 2 for notation).
Given (infinite partial) functions $f,g$ from $\omega$ into $\omega$, we say that $f,g$ are $\I$-eventually different ($\I$-ED for short) if $\{n\in \mathrm{dom}(f)\cap \mathrm{dom}(g):f(n)=g(n)\}\in\I$.
 We say that $\i{E}\subseteq\bs$ is an $\I$-ED family if every pair of distinct $f,g\in\i{E}$ is $\I$-ED. 
For such $\i{E}$, if in addition it holds that for every $h\in\bs$ there exists $f\in\i{E}$ such that $f,h$ are not $\I$-ED, then we say that $\i{E}$ is an $\I$-MED family ($\I$-maximal eventually different family).

This definition is similar to that of a MAD family, i.e.,
an infinite maximal $\fin$-AD family of subsets of $\omega$, where we call $\i{A}$ an $\I$-AD family if $\i{A}\subseteq\ijia$ and $A\cap A'\in \I$ for every $A,A'\in\i{A}$, and denote by $\fin$ the ideal of finite subsets of $\omega$.

While various families of reals which are maximal in one set or the other can be constructed using the Axiom of Choice, Mathias  \cite{mathias1977happy} proved that there exist no analytic MAD families. 
Motivated by this, it was asked by several authors   \cite{brendle2013some,kastermans2008analytic,tornquist2018definability}  if there exists a $\fin$-MED family which is analytic.  Horowitz and  Shelah \cite{horowitz2024borel} gave a positive answer to this question. The third author simplified and clarified their proof \cite{schrittesser2017horowitz} and also extended it \cite{schrittesser2018compactness}. 

In this paper, we explore the possibility of constructing a Borel (or closed) $\I$-MED family for ideals $\I$ other than $\fin$. 
We  first  present a  general method for constructing Borel $\I$-MED families by modifying a lemma of  the third author (in Section 2); then we isolate a class of ideals that are relevant to this method by introducing a Ramsey-type property (in Section 3). It turns out that if an ideal $\I$ is either 
$F_\sigma$ or the $\alpha$-th iterated Fubini product  $\fin^{\alpha}$ for some  $\alpha<\omega_1$, then $\I$ is a member of this class and thus a Borel  $\I$-MED family exists (Section 4). At last, in Section 5 we show that for every  ideal $\I\supseteq\fin$, if there exists a Borel   $\I$-maximal eventually different family, then there exists  a closed   $\I$-maximal eventually different family. 
In particular, there exists  a closed   $\I$-maximal eventually different family when $\I$ is
$F_\sigma$ or $\I= \fin^{\alpha}$ for some  $\alpha<\omega_1$.

\section{Notation}

Let  $\Omega$ be an infinite countable set. An ideal $\I$ on $\Omega$ is a family of subsets of $\Omega$ which is closed under finite unions and taking subsets. 
We say two ideals $\I,\J$ on $\Omega,\Omega'$ respectively, are isomorphic if there exists a bijection $f:\Omega\to \Omega'$ such that for every $A\subseteq \Omega'$, $A\in\J$ if and only if $f^{-1}(A)\in \I$.
Let us repeat that a family $\i{A}$
is called an 
$\I$-AD family if $\i{A}\subseteq\ijia$ and $A\cap A'\in \I$ for every $A,A'\in\i{A}$.

We can therefore identify $\Omega$ and $\omega$ and shall often do so tacitly.
Fix an ideal $\I$ on a countable set, w.l.o.g. on $\omega$.
For $A,B\subseteq\omega$, we write $A\subseteq_{\I}B$ for $A\setminus B\in \I$. 
We denote by $\fin$
the ideal of finite sets on $\omega$. 
All ideals we consider in this paper are assumed to include $\fin$, usually without further mention.  
If $A\subseteq\omega$ and $A\notin \I$, then we say that $A$ is $\I$-positive; the set of all $\I$-positive sets is denoted by $\ijia$. For any $A\subseteq\omega$ define $\I|A=\{I:I\subseteq A$ and $I\in\I\}$. Then $\I|A$ is an ideal on $A$. 

When we say $\I$ is $F_\sigma$, we mean that upon identifying a set with its characteristic function, $\I$ is an $F_\sigma$ subset of $2^\omega$ (carrying the product topology). 
It can be shown (see \cite{mazur1991f_sigma}) that an ideal $\I$ is $F_\sigma$ if and only if there exists a 
lower semicontinuous submeasure $\phi$ on $\omega$ such that $\I=\fin(\phi)=\{I:\phi(I)<\infty\}$. Here,  a lower semicontinuous  submeasure $\phi$ on $\omega$ is a function $\phi\colon \powerset(\omega)\to[0,+\infty]$ such that:  
\begin{enumerate}
\item 
$\phi(\emptyset)=0$; 
\item
$\phi(A)\leq \phi(A\cup B)\leq \phi(A)+\phi(B)$ for every $A,B\subseteq\omega$; 
\item
$\phi(\{n\})<+\infty$ for each $n<\omega$; 
\item 
$\phi(A)=\lim\limits_{n \to\infty}\phi(A\cap n)$ for every $A\subseteq\omega$. 
\end{enumerate}
  The definition of  $\fin^{\alpha}$  will be given in Section 4. 
 
For the entire paper, fix a recursive injection $\sharp:\bt\rightarrow\omega$ which we think of as a coding function for finite sequences of integers. For other set-theoretic notations we follow \cite{kechris2012classical}.

\section{A framework for producing $\I$-MED families }

The following lemma is basic in our construction of  an $\I$-MED family. It is a natural generalization of   \cite[Lemma 2.5]{schrittesser2017horowitz}.
For lemma as well as the remainder of this article, the following notation will be useful:
For functions $f,g$ and $X \subseteq \dom(f) \cap \dom(g)$, let
\[S(X,f,g)=\{n\in X: f(n)= g(n)\}.
\] 

\begin{lemma}\label{David's framework}
    Let $\I$ be an ideal on $\omega$. Denote $e(f)=\{\sharp(f|_n):n\in\omega\}$ for $f\in\bs$. Suppose $T\subseteq\bs$ and $C:\bs\rightarrow\is$ are such that 
    \begin{itemize}
        \item[(1)] for every $f\in\bs$, $f\in T  \longleftrightarrow\forall  g \in \bs \ S(C(f),f,e(g)) \in \I$;
        \item[(2)] $C$ is an injection such that the image of $C$ is an $\I$-AD family.
    \end{itemize}
    Then $\i{E}=\{E(f):f\in\bs\}$ is an $\I$-MED family, where 

\begin{equation*}
 E(f)(n)=%
  \begin{cases}
   f(n), &\text{if $f\in T$ and $n\in C(f)$,} \\
   e(f)(n),  &\text{otherwise}.
  \end{cases}
\end{equation*}
Moreover, if $\I$ and $T$ are Borel and $C$ is analytic, $\i{E}$ is analytic.    
\end{lemma}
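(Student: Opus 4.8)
The plan is to verify the two defining conditions of an $\I$-MED family and then the definability addendum, after recording one preliminary fact about the coding map $e$. Identifying each set $e(f)$ with its increasing enumeration (so that $e\colon\bs\to\bs$), I would first observe that $\{e(f):f\in\bs\}$ is almost disjoint: if $f\neq f'$ then, since $\sharp$ is injective and $f|_n=f'|_n$ holds only on an initial segment of $n$'s, the intersection $e(f)\cap e(f')$ is finite. Consequently $\{n:e(f)(n)=e(f')(n)\}$ is finite (any common value would lie in $e(f)\cap e(f')$), hence in $\fin\subseteq\I$. This is the one genuinely new small computation, and I would place it at the start.

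For the $\I$-ED property, fix $f\neq f'$ and split $\{n:E(f)(n)=E(f')(n)\}$ according to which clause of the definition computes $E(f)(n)$ and $E(f')(n)$. If both use the first clause then $f,f'\in T$ and $n\in C(f)\cap C(f')$, so the agreement set is contained in $C(f)\cap C(f')\in\I$ by hypothesis (2). If $E(f)(n)$ uses the first clause and $E(f')(n)$ the second, then $f\in T$, $n\in C(f)$, and agreement forces $n\in S(C(f),f,e(f'))$, which lies in $\I$ by hypothesis (1) applied with $g=f'$; the reverse case is symmetric. If both use the second clause, agreement forces $e(f)(n)=e(f')(n)$, a finite set by the preliminary observation. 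Since $\I$ is closed under finite unions, the whole agreement set is in $\I$.

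For maximality, fix $h\in\bs$ and split on whether $h\in T$. If $h\in T$ then $E(h)(n)=h(n)$ for every $n\in C(h)$, and $C(h)\in\ijia$, so $E(h)$ and $h$ are not $\I$-ED. If $h\notin T$, then hypothesis (1) yields $g$ with $P:=S(C(h),h,e(g))\notin\I$. If $g\notin T$ then $E(g)=e(g)$ everywhere, so $E(g)$ agrees with $h$ on the positive set $P$. If $g\in T$ then $g\neq h$ (as $h\notin T$), so $P\cap C(g)\subseteq C(h)\cap C(g)\in\I$ by (2), whence $P\setminus C(g)$ is still $\I$-positive; on $P\setminus C(g)$ we have $E(g)=e(g)=h$, so again $E(g)$ is not $\I$-ED from $h$. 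I expect this last subcase---passing from $P$ to the still-positive $P\setminus C(g)$ via almost-disjointness---to be the main point to get right.

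For the moreover, the subtlety is that $C$ is only analytic, so ``$n\notin C(f)$'' is a priori co-analytic and $f\mapsto E(f)$ need not be Borel; I would avoid evaluating membership in $C(f)$ directly by existentially quantifying over the value $A=C(f)$. Concretely,
\[
\psi\in\i E\iff \exists f\,\exists A\,\big[A=C(f)\ \wedge\ \forall n\,\Phi(\psi,f,A,n)\big],
\]
where $\Phi(\psi,f,A,n)$ abbreviates
\[
\big((f\in T\wedge n\in A)\to\psi(n)=f(n)\big)\ \wedge\ \big(\neg(f\in T\wedge n\in A)\to\psi(n)=e(f)(n)\big).
\]
With $A$ a free variable, ``$n\in A$'' is clopen, ``$f\in T$'' is Borel, and $e$ is continuous, so $\forall n\,\Phi$ is Borel in $(\psi,f,A)$; the clause $A=C(f)$ is $\SIGMA^1_1$ because the graph of $C$ is analytic. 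Hence the bracketed formula is $\SIGMA^1_1$, and projecting away $f$ and $A$ preserves $\SIGMA^1_1$, so $\i E$ is analytic. Once this reformulation is in place the definability claim is routine.
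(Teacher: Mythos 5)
Your proof is correct and takes essentially the same route as the paper's: the same verification of the $\I$-ED property (yours organized pointwise by which clause of $E$ applies, the paper's by cases on $T$-membership of $f_0,f_1$ --- a cosmetic difference), the identical maximality argument including the key step of passing to the still-positive set $S(C(h),h,e(g))\setminus C(g)$ when $g\in T$, and the same device of existentially quantifying over $X=C(f)$ to keep the definition of $\i{E}$ analytic despite $C$ being only analytic.
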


\begin{remark}
In fact the proof shows a bit more, namely:
To obtain that $\i{E}$ is analytic,
it suffices that $T$ be Borel and $C$ analytic; no explicit assumption about $\I$ is necessary here.
If in addition, $\I$ is co-analytic or there is an analytic function $F$ with $\i{E} \subseteq \dom(F)$ and $F\circ E = \operatorname{id}$, then $\i{E}$ is Borel.

We warn the reader that we do not know examples of ideals for which this added generality is necessary:
We use this lemma later for uniformly weak Ramsey ideals, and all uniformly weak Ramsey ideals we know of are Borel.
\end{remark}

\begin{proof} (i) First we show that $\i{E}$  is $\I$-ED. For distinct $g_0,g_1$ in $\i{E}$, there exist distinct $f_0, f_1\in\bs$ such that $g_0 = E(f_0)$ and $\ g_1 = E(f_1)$. The proof is then divided into the following cases.
    \begin{itemize}
        \item[Case 1.] $f_0, f_1\notin T $. Then $g_0=e(f_0)$ and $g_1=e(f_1)$. Clearly $e(f_0),e(f_1)$ are eventually different.
        \item[Case 2.] $f_0 \notin T, f_1\in T$. Notice that $g_0 = e(f_0)$.  It follows from the following two observations that $g_0,g_1$ are $\I$-ED.
        \begin{itemize}
            \item On $\omega\setminus C(f_1)$ we have $g_1 = e(f_1)$. So $g_0,g_1$  agree on only finitely many points of $\omega\setminus C(f_1)$. 
            \item  On $C(f_1)$ we have $g_1 = f_1$. As $f_1\in T$, we have that $S(C(f_1),f_1,e(f_0))\in\I$.
        \end{itemize}

         \item[Case 3.] $f_0,f_1\in T$. 
       On $(\omega\setminus C(f_0)) \cap (\omega\setminus C(f_1))$ we have that $g_0=e(f_0)$ and $ g_1=e(f_1)$.
          On $C(f_0)\backslash C(f_1)$ we have that $g_0 = f_0$ and $g_1 = e(f_1)$. By $f_1 \in T$, we have
           \[S(C(f_0)\backslash C(f_1), f_0, e(f_1))\subseteq S(C(f_0), f_0, e(f_1))\in\I.\]
           So $S(C(f_0)\backslash C(f_1), g_0, g_1)\in \I$. Similarly we deduce that  $$S(C(f_1)\backslash C(f_0), g_0, g_1)\in \I.$$ To see that $g_0,g_1$ are $\I$-ED, it suffices to note that
           $C(f_0)\cap C(f_1)\in\I$.
    \end{itemize}

   (ii) Now we show the maximality of $\i{E}$. Let $f\in \bs$.
    \begin{itemize}
        \item[Case 1.] $f\in T$. Then $E(f)|_{C(f)} = f|_{C(f)}$. Note that $C(f)\in \I\jia$ and $E(f)\in \i{E}$.
        \item[Case 2.] $f\notin T$. Then $\exists g \in \bs \ [S(C(f), f, e(g))\in\I\jia]$. Consider the following two subcases.

\begin{itemize}
    \item $ g\notin T$. Then $E(g) = e(g)$. So $f$ and $E(g)$ agree on $S(C(f), f, e(g))$ which is $\ijia$.
    \item  $g\in T$. Then $f\not= g $ and $C(f)\cap C(g) \in \I$. So $S(C(f), f, e(g))\backslash C(g) \in\I\jia$. On $S(C(f), f, e(g))\backslash C(g)$ we have $E(g) = e(g)$. So $f$ and $E(g)$ agree on $S(C(f), f, e(g))\backslash C(g)$ which is $\ijia$.
\end{itemize}

    \end{itemize}

  Suppose that $T$ is Borel and $C$ is analytic. The following equivalence shows that $\i{E}$ is analytic:
     \begin{equation}\label{eq.X}
    \begin{aligned}
    g\in\i{E} \longleftrightarrow \exists f\in \bs  \big(& \big[f\notin T \wedge g = e(f)\big]\\
    &\vee \big[f\in T \wedge \exists X\in \is 
    \\ &\hspace{2em} C(f) = X \wedge f| X = g|X \wedge e(f)|_{\omega\setminus X} = g|_{\omega\setminus X}\big]\big).
    \end{aligned}
    \end{equation}
        If moreover $\I^+$ is analytic, by maximality of $\i{E}$ we have that
    \[
    h\notin \i{E} \longleftrightarrow\exists g\in \i{E} \ S(\omega, h,g)\in \I\jia.
    \]
        Thus,  $\i{E}$ is Borel in this case.
If as in Remark~\ref{David's Framework}, we just assume that $E$ has a Borel left inverse $F$, then   
\begin{equation}\label{eq.Y}
    \begin{aligned}
    g\in\i{E} 
    \longleftrightarrow 
    g \in \dom(F) \wedge 
    \Big\{ 
    &\forall f\in \dom(F) \; \forall X\in \is  \; \Big[ \big[ f = F(g) \wedge X = C(f) \big] \rightarrow\\
      \ \big(&\big[f\notin T \wedge g = e(f)\big]\\
    &\vee \big [f\in T \wedge  
    f| X = g|X \wedge e(f)|_{\omega\setminus X} = g|_{\omega\setminus X} \big]\big)\Big]
    \Big\}.
    \end{aligned}
    \end{equation}
Thus if such $F$ exists, too,  is $\i{E}$ Borel.
\end{proof}

\section{Uniformly weak Ramsey ideals and $\I$-MED families}

In this section, we isolate a class of the so-called uniformly weak Ramsey ideals, and show that
for every Borel ideal $\I$ from this class there exists a Borel $\I$-MED family.

\begin{definition}
    In this paper, by a coloring we mean a function $c:[\omega]^2\rightarrow 2$. 
    We call such $c$ subbadditive if for all $m,n, k\in \omega$ with $m < n < k$ we have
    \[
        c(\{m,n\})\leq c(\{m,k\})+c(\{n,k\}).
    \]
\end{definition}

As a concrete example, consider the following ordering and it's associated coloring. This example will play a role in the proofs to follow.
\begin{example}\label{example of subadditive coloring}
    Fix $f\in\bs$. For each $m,n\in\omega$, we define $m\prec_f n$ if $m<n$ and there exist $t,s\in\bt$ such that 
    \begin{itemize}
        \item $f(m)=\sharp t$, $f(n)=\sharp s$;
        \item $|t|=m$,$|s|=n$;
        \item $t\subseteq s$.
    \end{itemize}
    Define
   \begin{equation*}
 c_f(\{m,n\})=%
  \begin{cases}
  0, &\text{if $m\prec_f n$,} \\
  1,  &\text{otherwise}.
  \end{cases}
\end{equation*}
Then $c_f$ is subadditive. Notice that if we interchange the positions of $0$ and $1$, then $c_f$ is not subadditive.
\end{example}

\begin{definition}
Let $\I$ be an ideal on $\omega$ and $c$ a coloring.
We say $H\subseteq \omega$ is nowhere $0$-homogeneuous if for every $B\in (\I|H)^+$ there exist distinct $m,n\in B$ such that $c(\{m,n\})=1$. %
\end{definition}
    
Given any ideal $\I$ and any coloring $c$ whatsoever, it follows from the definition that
the collection $\mathcal H_0$ of sets which are $0$-homogenous or nowhere $0$-homogeneous is dense in $\langle\I^+,\subseteq\rangle$.\footnote{This holds,
of course, for any $\mathcal P_1, \mathcal P_0$ such that  $\mathcal P_1 \subseteq \mathcal P_0 \subseteq \powerset(\omega)$ instead of $\mathcal P_0 = \I^+$ and $\mathcal P_1 =$ the collection of $0$-homogeneous $\I$-positive sets.}
Another question is whether one can find an element of $\mathcal H_0$ below a given $A \in \I^+$ by an effective procedure.
We are interested in ideals where the answer is positive at least for subadditive colorings.

\begin{definition}
We say that an ideal $\I$ on $\omega$ is uniformly weakly Ramsey if for every subadditive coloring $c$ and every $A \in \I^+$, the existence of $H \in (\I|A)^+$ such that
\begin{enumerate}
        \item $H$ is $0$-homogeneuous, or \label{i.hom}
        \item $H$ is nowhere $0$-homogeneuous\label{i.nowhere}
\end{enumerate}
is witnessed by a Borel function. 
    To be precise, we say $\Phi$ witnesses that $\I$ is weakly Ramsey 
    to mean  that $\Phi:\is\times 2^{[\omega]^2}\rightarrow\is$ and \ref{i.hom} or \ref{i.nowhere} holds for  $H = \Phi(A,c)$ whenever $A \in \mathcal I^+$ and $c$ is subadditive.
    Using this terminology, we say $\I$ is uniformly weakly Ramsey if 
    there exists a Borel function witnessing that $\I$ is weakly Ramsey.
    \end{definition}

We momentarily deviate from the main line of investigation of this article to explore the ramifications of this definition.    
To sharpen our intuition, we establish the following fact:

\begin{fact}
No maximal ideal on $\omega$ is uniformly weakly Ramsey.
\end{fact}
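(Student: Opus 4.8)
The plan is to argue by contradiction: assuming a maximal ideal $\I$ is uniformly weakly Ramsey, as witnessed by a Borel function $\Phi\colon\is\times 2^{[\omega]^2}\to\is$, I would manufacture a Borel definition of the dual ultrafilter $\U:=\ijia$. This is impossible, since no nonprincipal ultrafilter on $\omega$ is a Borel subset of $\cs$ (it does not even have the Baire property, by the classical flip argument). The first, purely bookkeeping, step is to note that because $\I$ is maximal and contains $\fin$, for every $A\subseteq\omega$ exactly one of $A,\omega\setminus A$ is $\I$-positive; hence $\ijia$ coincides with the dual filter and is a nonprincipal ultrafilter $\U$, and $(\I|B)^+=\{C\subseteq B:C\in\U\}$ for every $B$, so in particular $(\I|\omega)^+=\U$.

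The heart of the argument is to feed $\Phi$ a coloring that encodes a candidate set $A$. For $A\subseteq\omega$ I would use the ``same side of $A$'' coloring $c_A$, defined by $c_A(\{m,n\})=0$ when $\{m,n\}\subseteq A$ or $\{m,n\}\cap A=\emptyset$, and $c_A(\{m,n\})=1$ otherwise. Two routine checks are needed: that $c_A$ is subadditive (a $1$-colored pair means $m,n$ lie on opposite sides of $A$, and since ``same side of $A$'' is an equivalence relation, the only configuration that could violate subadditivity is excluded), and that $A\mapsto c_A$ is continuous, since the value $c_A(\{m,n\})$ depends only on $A\cap\{m,n\}$.

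Next I would show that for this coloring the ``nowhere $0$-homogeneous'' alternative can never occur on a positive set: given any $H\in\U$, since $\U$ is an ultrafilter one of $H\cap A$, $H\setminus A$ lies in $\U$, and that set is a positive $0$-homogeneous subset of $H$, so $H$ fails to be nowhere $0$-homogeneous. Consequently $H:=\Phi(\omega,c_A)$, which by hypothesis lies in $(\I|\omega)^+=\U$ and satisfies clause \ref{i.hom} or \ref{i.nowhere}, must in fact satisfy \ref{i.hom}, i.e.\ be $0$-homogeneous. An infinite $0$-homogeneous set for $c_A$ consists of pairwise same-side points, hence $H\subseteq A$ or $H\subseteq\omega\setminus A$; combined with $H\in\U$ this yields the equivalence $A\in\U\longleftrightarrow H\subseteq A$. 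Therefore $\U=\{A:\Phi(\omega,c_A)\subseteq A\}$, and since $A\mapsto c_A$, the map $\Phi$, and the (closed) relation $H\subseteq A$ are all Borel, $\U$ would be a Borel set, the desired contradiction.

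The main obstacle, and the point that actually makes the argument go through, is the middle step: ensuring that $\Phi$ cannot escape into the (otherwise vacuously available) ``nowhere $0$-homogeneous'' option. This is exactly where maximality is used, as the ultrafilter property forces every positive set to contain a positive $0$-homogeneous piece, pinning $\Phi$ to returning an honest same-side set from which membership in $\U$ can be decoded. The remaining ingredients, namely subadditivity of $c_A$, Borelness of the maps involved, and non-Borelness of a nonprincipal ultrafilter, are standard.
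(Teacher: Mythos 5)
Your proof is correct and takes essentially the same approach as the paper: encode each candidate set $A$ into a subadditive coloring, apply the Borel witness $\Phi$, and read off a Borel definition of the maximal ideal (equivalently, its dual ultrafilter), contradicting the non-definability of maximal ideals. The only cosmetic difference is the choice of coloring --- the paper uses $c_X(\{m,n\})=0$ iff $\{m,n\}\subseteq X$ and decodes membership of $X$ from \emph{which} alternative of the dichotomy holds, whereas your symmetric ``same side'' coloring rules out the nowhere-$0$-homogeneous alternative entirely and decodes membership via the inclusion $\Phi(\omega,c_A)\subseteq A$.
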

\begin{proof}
Given a set $X \subseteq \omega$ define a coloring $c_X$ as follows:
\[
c_X(\{m,n\}) = \begin{cases}
	0 & \text{if $\{m,n\} \subseteq X$;}\\
	1 & \text{otherwise.}
\end{cases}
\]

Observe that $c_X$ is subbadditive.
Towards a contradiction, suppose $\mathcal I$ is a maximal ideal on $\omega$ and $\Phi$ witnesses that $\mathcal I$ is uniformly weakly Ramsey.
Given $X \subseteq \omega$, let $H = \Phi(c_X, \omega)$.
By maximality, $H \in \I^*$.
If $H$ is $0$-homogeneous, $H \subseteq X$, so $X \in \mathcal I^*$.
If on $H$ is not $0$-homogeneous, $H \cap X \in \mathcal I$ by \ref{i.nowhere} above, whence by maximality of $\mathcal I$, $X \in \mathcal I$.
Thus, for any $X \subseteq \omega$, $X \in \mathcal I$ if and only if 
$\Phi(c_X, \omega)$ is not $0$-homogeneous.
From this it easily follows that $\mathcal I$ is analytic, contradicting the assumption that it is maximal.
\end{proof}

Although this is not needed in the rest of this article, we point out that subadditive colorings are can be viewed as presentations of countable trees of height $\leq\omega$, in the following manner:
Given a coloring $c$, define the binary relation $R_c$ on $\omega$ by
\[
R_c = \{(m,n) \colon m<n \text{ and }c(\{m,n\}) = 0\}.
\]
Indeed, Example~\ref{example of subadditive coloring} is a relation of this sort.

Recall that a relation $R$ is tree-like if $R$ is a binary relation such that the $R$-predecessors of any element are linearly ordered by $R$, i.e., if $t_0 R t$ and $t_1 R t$ then $t_0 R t_1$ or $t_1 R t_0$.
Call a relation binary $\mathcal R$ on $\omega$ normal if $R \subseteq <$, that is, $m \mathbin R n \Rightarrow m<n$. 
Write $\mathcal T$ for the normal tree-like relations on $\omega$.

\begin{fact}
Via the bijection $c \mapsto R_c$ from the set of colorings to $\mathcal R$, the subadditive colorings are in bijection to $\mathcal T$, the normal tree-like relations on $\omega$.
\end{fact}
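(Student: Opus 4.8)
The plan is to check two things in turn: that $c \mapsto R_c$ is a bijection from the colorings onto $\mathcal{R}$, the normal binary relations on $\omega$, and that under this correspondence subadditivity of $c$ matches tree-likeness of $R_c$. For the bijection I would simply observe that a coloring $c\colon [\omega]^2 \to 2$ is determined by the set of pairs it sends to $0$, and that $R_c \subseteq {<}$ by construction; conversely any normal relation $R \subseteq {<}$ is $R_c$ for the coloring assigning $0$ to $\{m,n\}$ (with $m<n$) exactly when $(m,n)\in R$. These assignments are visibly mutually inverse, so this step is immediate.

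The substance of the fact is the equivalence of subadditivity with tree-likeness, which I would obtain by a direct analysis of the defining inequality. Because the codomain is $\{0,1\}$, for a fixed triple $m<n<k$ the inequality $c(\{m,n\}) \leq c(\{m,k\}) + c(\{n,k\})$ can fail in only one way, namely when $c(\{m,n\}) = 1$ and $c(\{m,k\}) = c(\{n,k\}) = 0$. Rewriting this failure through $R_c$, it asserts that $m \mathbin{R_c} k$ and $n \mathbin{R_c} k$ while $\neg(m \mathbin{R_c} n)$. Hence $c$ is subadditive if and only if, for all $m<n<k$, the conjunction $m \mathbin{R_c} k \wedge n \mathbin{R_c} k$ forces $m \mathbin{R_c} n$.

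Finally I would match this with the tree-like condition. Since $R_c$ is normal, every $R_c$-predecessor of an element $k$ lies below $k$, so two distinct predecessors can always be written $m<n$; tree-likeness then demands $m \mathbin{R_c} n$ or $n \mathbin{R_c} m$, and the second disjunct is excluded by normality, as it would force $n<m$. Thus $R_c$ is tree-like exactly when $m \mathbin{R_c} k \wedge n \mathbin{R_c} k$ implies $m \mathbin{R_c} n$ for all $m<n<k$, which is precisely the reformulation of subadditivity reached above. The proof is thus a routine unwinding of definitions; the only point that genuinely uses the hypotheses is this appeal to normality to reduce two-sided comparability of predecessors to the single relation $m \mathbin{R_c} n$, so that is where I would be most careful.
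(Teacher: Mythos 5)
Your proof is correct. The paper states this Fact without any proof at all, so there is no argument to compare yours against; the verification you give---identifying colorings with normal relations via their zero-sets, observing that over the codomain $\{0,1\}$ the subadditivity inequality for $m<n<k$ can fail only in the single configuration $c(\{m,n\})=1$, $c(\{m,k\})=c(\{n,k\})=0$, and matching this exactly with the single way tree-likeness can fail once normality rules out $n \mathbin{R_c} m$ for $m<n$---is precisely the routine unwinding the authors implicitly leave to the reader. One small point in your favor: you correctly read the paper's linearity condition on predecessors as applying to \emph{distinct} predecessors, which is necessary, since a normal relation is irreflexive and a literal reading with $t_0 = t_1$ would make every nonempty normal relation fail to be tree-like.
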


In other words, the subbadditive colorings may be viewed as presentations of countable trees of height at most $\omega$, allthough each such tree $T$ has many colorings associated to it, one for each isomorphic copy of $T$  in $\mathcal T$.

\medskip

\begin{fact}
An ideal $\I$ on $\omega$ is uniformly weak Ramsey if and only if there is a Borel funcion 
$\Phi \colon \mathcal T \times \powerset(\omega) \to \I^+$
such that for any $R\in \mathcal T$ and any $A \in \I^+$,
$\Phi(R, A)$ is linearly ordered by $R$
or no $\I$-positive subset of $\Phi(R, A)$ is linearly ordered by $\prec$.
\end{fact}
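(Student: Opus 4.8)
The plan is to recognize this statement as a verbatim reformulation of the definition of ``uniformly weakly Ramsey,'' transported along the bijection $c\mapsto R_c$ of the preceding Fact; no new combinatorics is involved, only a translation of vocabulary. Identifying a coloring $c$ with its point in $2^{[\omega]^2}$ and a normal relation $R\subseteq{<}$ likewise with a point of $2^{[\omega]^2}$ (since $R\subseteq{<}$ means $R$ is determined by which increasing pairs it contains), the assignment $c\mapsto R_c$ is simply the coordinatewise complementation $\{m,n\}\mapsto 1-c(\{m,n\})$. In particular it is a homeomorphism, and by the preceding Fact it carries the (closed) set of subadditive colorings bijectively onto $\mathcal{T}$; hence both $c\mapsto R_c$ and its inverse $R\mapsto c_R$ are Borel, indeed continuous. (In the statement, $\prec$ denotes the relation $R$.)

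The one substantive step is to set up the dictionary between the colour-theoretic and the order-theoretic vocabulary. First I would verify, for $H\subseteq\omega$, the equivalence
\[
H \text{ is } 0\text{-homogeneous for } c \iff H \text{ is linearly ordered by } R_c.
\]
For the forward direction, $0$-homogeneity gives $c(\{m,n\})=0$, i.e.\ $(m,n)\in R_c$, for all $m<n$ in $H$, so $R_c\cap(H\times H)$ coincides with ${<}\cap(H\times H)$ and is a linear order; conversely, if $H$ is linearly ordered by $R_c$ then any $m<n$ in $H$ are $R_c$-comparable, and normality ($R_c\subseteq{<}$) forces $(m,n)\in R_c$, i.e.\ $c(\{m,n\})=0$. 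Unwinding the definition of $(\I|H)^+$ then yields at once
\[
H \text{ is nowhere } 0\text{-homogeneous for } c \iff \text{no } \I\text{-positive subset of } H \text{ is linearly ordered by } R_c,
\]
since ``nowhere $0$-homogeneous'' asserts precisely that no $B\in(\I|H)^+$ is $0$-homogeneous.

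With this dictionary the two implications are immediate transports. For $(\Rightarrow)$, given a Borel $\Psi$ witnessing that $\I$ is weakly Ramsey, I would put $\Phi(R,A)=\Psi(A,c_R)$, which is Borel as a composition of $\Psi$ with the continuous map $R\mapsto c_R$; for $A\in\I^+$ and $R\in\mathcal{T}$ the set $\Psi(A,c_R)\in(\I|A)^+$ is $0$-homogeneous or nowhere $0$-homogeneous for $c_R$, which by the dictionary says exactly that $\Phi(R,A)$ is linearly ordered by $R$ or has no $\I$-positive subset linearly ordered by $R$. For $(\Leftarrow)$ I would symmetrically set $\Psi(A,c)=\Phi(R_c,A)$ for subadditive $c$. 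What remains is routine bookkeeping with the domains: the functions must be defined on all of $\powerset(\omega)\times 2^{[\omega]^2}$ (resp.\ $\mathcal{T}\times\powerset(\omega)$), so on the irrelevant inputs---$A\notin\is$ (impossible for $A\in\I^+$, as $\fin\subseteq\I$) and non-subadditive $c$---I would assign the fixed $\I$-positive default value $\omega$. These extensions stay Borel because ``$A$ infinite'' and ``$c$ subadditive'' are Borel conditions, and the positivity/containment requirement on $\Phi(R,A)$ corresponds verbatim to $H\in(\I|A)^+$. I do not expect any genuine obstacle; the only points needing care are the equivalence for nowhere-$0$-homogeneity in the second paragraph and keeping track of these domain conventions.
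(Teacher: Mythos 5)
Your proof is correct: the paper states this Fact without proof (treating it as an immediate consequence of the preceding bijection $c \mapsto R_c$), and your argument---the dictionary identifying $0$-homogeneity with being linearly ordered by $R_c$ and nowhere $0$-homogeneity with having no $\I$-positive linearly ordered subset, plus transport of the witnessing function along the continuous maps $c \mapsto R_c$ and $R \mapsto c_R$ with Borel default values on irrelevant inputs---is exactly the intended routine unwinding. Your two interpretive choices (reading $\prec$ as $R$, and reading the codomain condition as $\Phi(R,A) \in (\I|A)^+$, which is indeed needed for the backward direction) are the correct readings of the statement.
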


We return to the main topic (i.e., $\I$-MED families) and come to the main theorem in this section:

\begin{theorem}\label{uwR imply exist Borel I-med}
    If $\I$ is an ideal on $\omega$ which is Baire measurable and uniformly weakly Ramsey, there exists a Borel $\I$-MED family.
\end{theorem}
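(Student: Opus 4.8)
The plan is to instantiate Lemma~\ref{David's framework}: I will produce a Borel set $T\subseteq\bs$ and a Borel map $C\colon\bs\to\is$ verifying conditions (1) and (2) there, and then read off Borelness of $\i{E}$. The engine is the subadditive coloring $c_f$ of Example~\ref{example of subadditive coloring}. The decisive observation is that, for fixed $f$, a set $H$ is $0$-homogeneous for $c_f$ \emph{exactly} when $f$ agrees with some $e(g)$ throughout $H$: since $c_f(\{m,n\})=0$ forces $f(m)=\sharp t$, $f(n)=\sharp s$ with $|t|=m$, $|s|=n$ and $t\subseteq s$, an infinite $0$-homogeneous $H$ yields a chain of initial segments whose union is a branch $g\in\bs$ with $H\subseteq S(\omega,f,e(g))$; conversely any such agreement set is $0$-homogeneous. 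Hence, for $H\in\ijia$, $H$ is nowhere $0$-homogeneous iff $S(H,f,e(g))\in\I$ for every $g$, whereas $H$ being $0$-homogeneous produces a $g$ with $S(H,f,e(g))=H\notin\I$. This is precisely the dichotomy encoded in condition (1).

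Accordingly I set $C(f)=\Phi(A_f,c_f)$, where $\Phi$ is a Borel function witnessing that $\I$ is uniformly weakly Ramsey and $A_f\in\ijia$ is chosen below. As $c_f$ is subadditive and $A_f$ is $\I$-positive, $C(f)\in(\I\res A_f)\jia$ is either $0$-homogeneous or nowhere $0$-homogeneous; define $T=\{f:C(f)\text{ is not }0\text{-homogeneous}\}$. The correspondence above gives condition (1) directly, and $T$ is Borel because $C$ is Borel and ``$C(f)$ is $0$-homogeneous'' is a Borel condition on $f$ (it is a countable statement about membership in $C(f)$ and the value of the continuously-varying $c_f$).

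It remains to choose the $A_f$ so that condition (2) holds, and this is where Baire-measurability is used. A Baire-measurable proper ideal extending $\fin$ is meager, so by the Jalali--Naini--Talagrand characterization there is a partition of $\omega$ into finite intervals $(I_k)_{k\in\omega}$ such that any set containing infinitely many of the $I_k$ is $\I$-positive. Using the almost disjoint family $N_f:=e(f)=\{\sharp(f\res n):n\in\omega\}$ (infinite, co-infinite, pairwise finite intersection, Borel in $f$), I put $A_f=\bigcup_{k\in N_f}I_k$. Then $A_f\in\ijia$ since it contains infinitely many whole intervals, while $A_f\cap A_{f'}=\bigcup_{k\in N_f\cap N_{f'}}I_k$ is finite for $f\neq f'$; thus $\{A_f\}$ is an $\I$-AD family and $f\mapsto A_f$ is Borel. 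Since $C(f)\subseteq A_f$ and $C(f)\in\ijia$, the image of $C$ is $\I$-AD, and $C$ is injective (if $C(f)=C(f')$ with $f\neq f'$ then $C(f)\subseteq A_f\cap A_{f'}\in\I$, contradicting positivity), giving condition (2).

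By Lemma~\ref{David's framework}, $\i{E}$ is then an $\I$-MED family, and it is analytic because $T$ is Borel and $C$ is Borel. For the Borel conclusion, note that $E$ itself is Borel (its definition uses only the Borel objects $T$ and $C$) and injective: for $g=E(f)$ the agreement set $\{m:g(m)=\sharp(f\res m)\}$ is $\I$-positive (it contains $\omega\setminus A_f$ when $f\in T$, and is all of $\omega$ when $f\notin T$), while no other branch can have $\I$-positive agreement with $g$, since on $\omega\setminus C(f)$ agreement is finite and on $C(f)$ it lies in $\I$ by $f\in T$. By the Lusin--Souslin theorem $E^{-1}$ is Borel on $\i{E}$, so $E$ has a Borel left inverse; this is exactly the hypothesis recorded after Lemma~\ref{David's framework} guaranteeing that $\i{E}$ is Borel. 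I expect the main obstacle to be establishing the $0$-homogeneity/agreement correspondence cleanly and the attendant bookkeeping that makes $T$ Borel and $E$ injective; by contrast, extracting the $\I$-AD family $\{A_f\}$ from meagerness, though indispensable, is routine once the interval partition is in hand.
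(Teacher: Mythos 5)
Your proposal is correct and takes essentially the same route as the paper: the same framework lemma instantiated with the coloring $c_f$ of Example~\ref{example of subadditive coloring}, the same $\I$-AD family extracted from the Jalali--Naini--Talagrand characterization of Baire-measurable ideals (your $A_f=\bigcup_{k\in e(f)}I_k$ is literally the paper's $C(f)=\phi^{-1}(\psi(f))$), the same application of the uniform weak Ramsey witness $\Phi$ to these sets, and the same definition of $T$ as failure of $0$-homogeneity. The only cosmetic difference is the final upgrade from analytic to Borel: you use injectivity of $E$ together with Lusin--Souslin, whereas the paper arranges a left inverse of $E$ by a density/majority-vote argument; both devices fall under the remark following Lemma~\ref{David's framework}.
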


\begin{proof}
Let $\I$ be such an ideal. We will apply Lemma \ref{David's framework}. 

First note that there exists a Borel injection $C:\bs\rightarrow\is$ such that the image of $C$ is $\I$-AD: It is proved in  \cite[Théorème~21]{talagrand1980compacts} and Jalali-Naini \cite{jalali-naini1976monotone} (see also  \cite[4.1.2,~p.~206]{bartoszynski1995set})
 that  $\fin$ is Rudin-Blass reducible to any Baire measurable ideal which contains $\fin$. In particular, $\fin$ is    Rudin-Blass reducible to $\I$, that is, there exists a function $\phi:\omega\rightarrow\omega$ such that for every $A\subseteq \omega$ we have that
\begin{center}
    $A\in\fin$ if and only if $\phi^{-1}(A)\in\I$.
\end{center}
Take a Borel injection $\psi:\bs\rightarrow\is$ such that the image of $\psi$ is $\fin$-AD (e.g.,$f\mapsto \{\sharp(f|n):n\in\omega\}$). Then $\{\phi^{-1}(\psi(f)):f\in\bs\}$ is $\I$-AD. Define $C=\phi^{-1}\circ\psi$. Then $C$ is Borel.

We can, moreover, demand that $f$ is uniformly recursive in any function $g$ such that 
$g | \big(\omega \setminus C(f)\big) = e(f) | \big(\omega \setminus C(f)\big)$, e.g., by choosing $C=\phi^{-1}\circ\psi$ 
where $\psi(f)$ has asymptotic density less $1/2$---for then the points at which $g$ agrees with $e(f)$ can be determined by majority vote, and thus, $f$ can be determined from $g$. 
Of course, the above-mentioned map $\psi(f) = \{\sharp(f|n):n\in\omega\}$ has this property.
 
Let $f\in\bs$. Define $\prec_{f}$ and $c_f$ as in 
Example \ref{example of subadditive coloring}. Since $\I$ is uniformly weak Ramsey, there exists a Borel function $\Phi$ which assigns to $C(f)$ a set $\Phi(C(f))\in (\I|C(f))^+$ such that 
either 
 \begin{itemize}
     \item[(A)]$\Phi(C(f))$ is $0$-homogeneous for $c_f$, or
     \item[(B)]$\Phi(C(f))$ is nowhere $0$-homogeneous for $c_f$. 
 \end{itemize}
Notice that if (B) is true, then for every $g\in\bs$, $S(\Phi(C(f)),f,e(g))\in\I$; if (A) is true, then for some $g\in\bs$, $S(\Phi(C(f)),f,e(g))\in\ijia$. So we define $T$ to consist of those $f\in\bs$ such that (A) fails. It is straightforward to verify that $T$ is Borel and $T$ and $C$ satisfy all conditions in Lemma \ref{David's framework}. Thus the existence of a Borel $\I$-MED family follows from Lemma \ref{David's framework}.
\end{proof}

\begin{remark}
We note that the proof does not simplify much if we assume that $\I$ be Borel.
In this case it is enough to appeal to \cite{mathias1975remark}
instead of the more general result due to Talagrand and Jalali-Naini;
also,  \eqref{eq.X} instead of \eqref{eq.Y} suffices to check Borel-ness in Lemma~\ref{David's framework}. 
These are the only simplifications.
\end{remark}

\section{concrete examples}

   With the help of Theorem \ref{uwR imply exist Borel I-med}, we prove in this section that if  $\I$ is either an $F_{\sigma}$ ideal or isomorphic to $\fin^{\alpha}$ for some $\alpha<\omega_{1}$, then there exists a Borel $\I$-MED family.
   A unifom version of a weakening of the well-known notion of $P^+$ ideal\footnote{Our use of the term ``$P^+$ ideal'' is the sense as, e.g., in \cite{hruvsak2017ramsey}. A more restrictive definition (for filters, and thus implicitly for ideals) was given in \cite{laflamme1996filter,laflamme2002filter}, which is better referred to as $P^+_{\text{tree}}$ to distringuish it from the previous notion (cf.\ \cite{hruvsak2011comparison}).} will be useful.
   Without the uniformity, this is simply the property that
   $(\I^+, \supseteq_{\I})$ is $\sigma$-closed.

\begin{definition}
    We say an ideal $\I$ on $\omega$ is weakly $P^+$ if for every $\subseteq$-decreasing sequence 
    $\{A_n:n\in\omega\}\subseteq\ijia$  there exists $\Bar{A}\in\I^+$ such that $\Bar{A}\subseteq_{\I}A_n$ for each $n\in\omega$. If there exists a Borel function $\Phi$ such that for every $\subseteq$-decreasing sequence $\{A_n:n\in\omega\}\subseteq\ijia$, we have that $\Phi(\{A_n:n\in\omega\})\in\ijia$ and $\Phi(\{A_n:n\in\omega\})\subseteq_{\I}A_n$ for each $n\in\omega$, then we say $\I$ is uniformly weakly $P^+$.
\end{definition}

\begin{lemma}\label{uwP+ imply uwR}
    Let $\I$ be a Borel ideal on $\omega$. If $\I$ is uniformly weakly $P^+$, then $\I$ is uniformly weakly Ramsey. %
\end{lemma}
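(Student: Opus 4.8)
The plan is to build the Borel witness $\Phi$ for weak Ramsey-ness directly from the given Borel witness $\Psi$ for uniform weak $P^+$, by running a single fusion of length $\omega$ and applying $\Psi$ at most once. Write $U(b)=\{k:k>b,\ c(\{b,k\})=0\}$ for the set of $0$-successors of $b$. Two elementary consequences of subadditivity will drive everything. First, the implication: if $m<n<k$ with $c(\{m,n\})=1$ and $c(\{m,k\})=0$, then $c(\{n,k\})=1$ (immediate from $c(\{m,n\})\le c(\{m,k\})+c(\{n,k\})$). Second, call $B\in\ijia$ \emph{$0$-thin} if $U(b)\cap B\in\I$ for every $b\in B$; every $0$-thin set is nowhere $0$-homogeneous, since if $D\subseteq B$ were positive and $0$-homogeneous then, taking $b=\min D$, we would get $D\setminus\{b\}\subseteq U(b)\cap B\in\I$, contradicting positivity.

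Next I would set up the fusion. Put $A_0=A$; given $A_n\in\ijia$, stop if $A_n$ is $0$-thin, and otherwise let $b_n$ be the least $b\in A_n$ with $U(b)\cap A_n\in\ijia$ and set $A_{n+1}=U(b_n)\cap A_n\in\ijia$. Since $\I$ is Borel, the predicate ``$A_n$ is $0$-thin'' and the choice of $b_n$ are Borel in $(A,c)$, so the sequence $\langle A_n\rangle$, the stopping stage, and the pivot set $P=\{b_n:n\in\omega\}$ are Borel. The pivots strictly increase and $c(\{b_i,b_j\})=0$ for $i<j$ (as $b_j\in A_j\subseteq U(b_i)$), so $P$ is $0$-homogeneous, and $\min A_n\to\infty$ forces $\bigcap_n A_n=\emptyset$. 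The output $\Phi(A,c)=H$ is defined by cases: if the fusion stops at stage $n$, return the $0$-thin (hence nowhere $0$-homogeneous) set $A_n$; if it never stops and $P\in\ijia$, return the $0$-homogeneous set $P$; and if it never stops but $P\in\I$, return $H=\bar A\cap A$, where $\bar A=\Psi(\langle A_n\rangle)\in\ijia$ satisfies $\bar A\subseteq_{\I}A_n$ for all $n$. The cases are Borel-separated, so $\Phi$ is Borel; in each case $H\subseteq A$ and $H\in\ijia$ (using $\bar A\setminus A\in\I$ in the last case).

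The crux, and the step I expect to be the main obstacle, is showing that $H=\bar A\cap A$ is nowhere $0$-homogeneous in the third case; the point is that a pseudo-intersection can never itself be made $0$-homogeneous, so positivity of a homogeneous set must come from $P$, and otherwise the subadditivity implication forces $1$-pairs everywhere. Since $\bigcap_n A_n=\emptyset$, the layers $L_n=A_n\setminus A_{n+1}$ partition $A$, and $\bar A\cap L_n\subseteq\bar A\setminus A_{n+1}\in\I$, so any positive $D\subseteq H$ meets infinitely many layers. Given such a $D$, I would assume toward a contradiction that $D$ is $0$-homogeneous; as $P\in\I$, the set $D\setminus P$ is still positive, so I may assume every $x\in D$ is a non-pivot lying in some $L_s$, either as a ``low'' point ($x<b_s$, whence $U(x)\cap A_s\in\I$ by minimality of $b_s$) or a ``high'' point ($x>b_s$ and $c(\{b_s,x\})=1$). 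A low point $x$ is impossible: $0$-homogeneity forces $D\cap(x,\infty)\subseteq U(x)$, and splitting along $A_s$ gives $D\cap(x,\infty)\subseteq(U(x)\cap A_s)\cup(\bar A\setminus A_s)\in\I$, contradicting positivity. Hence all points are high; choosing $x_i\in D\cap L_{n_i}$ with $n_0<n_1<\dots$ and using that an infinite set of naturals has no infinite strictly decreasing subsequence, I find $i<j$ with $x_i<x_j$. Then $x_j\in A_{n_j}\subseteq A_{n_i+1}=U(b_{n_i})$ gives $c(\{b_{n_i},x_j\})=0$, while $c(\{b_{n_i},x_i\})=1$ and $b_{n_i}<x_i<x_j$, so the subadditivity implication yields $c(\{x_i,x_j\})=1$, contradicting $0$-homogeneity of $D$. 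This shows $H$ is nowhere $0$-homogeneous and completes the construction of the Borel witness $\Phi$.
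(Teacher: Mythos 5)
Your proof is correct, but it takes a genuinely different route from the paper's. The paper runs its fusion along the increasing enumeration $\{a_n\}$ of $A$: at stage $n$ it shrinks $A_n$ to an $\I$-positive piece on which $c(\{a_n,\cdot\})$ is constant with some value $i_n\in\{0,1\}$, it \emph{always} applies the uniform weak $P^+$ witness to get $\bar A\subseteq_{\I}A_n$ for all $n$, and then it dichotomizes on which color $i_{n_k}$ occurs on an $\I$-positive subset of $\bar A$: if $0$, subadditivity (applied with a third point chosen far out in $A_{n_k}\cap A_{n_{k'}}$) makes that subset $0$-homogeneous; if $1$, it is nowhere $0$-homogeneous essentially by construction. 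You instead follow only the ``$0$-branch'': your pivots $b_n$ are chosen so that $U(b_n)\cap A_n$ is positive, you stop early if $A_n$ is $0$-thin, and your three-way case split means the $P^+$ witness is invoked only when the fusion runs forever and the pivot set $P$ is $\I$-small. The work is distributed in the opposite way: in the paper the $0$-homogeneous case is where subadditivity does its work and the nowhere case is immediate, whereas for you $P$ is $0$-homogeneous for free and all the effort goes into the nowhere-$0$-homogeneous verifications (the $0$-thin observation, and the layer decomposition with the low/high dichotomy in the third case --- both of which check out, including the use of minimality of $b_s$ against low points and the subadditivity implication against high points). What your route buys: the homogeneous outcome needs no appeal to $P^+$ at all, and the argument makes the tree structure behind subadditive colorings quite explicit. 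What the paper's buys: a single unconditional application of $P^+$ and a shorter, more symmetric final dichotomy.
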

\begin{proof}
    Let $A\in\ijia$ and $c:[\omega]^2\rightarrow 2$ be subadditive. Let  $\{a_{n}:n\in \omega\}$ be the natural increasing enumeration of $A$. Recursively define $\{A_{n}:n\in\omega\}\subseteq\ijia$ and $\{i_n:n\in\omega\}$ such that $A_0 = A$ and
    \begin{itemize}
        \item $a_n\notin A_{n+1}$;
        \item $c(a_n,a)= i_n$ for each $a\in A_{n+1}$;
        \item $A_{n+1}\subseteq A_{n}$;
        \item Both $\langle A_{n}:n\in\omega\rangle$ and $\langle i_n:n\in\omega\rangle$ are analytic in $(A,c)$. 
    \end{itemize}
    Let $\Bar{A}\in\ijia$ be an output of some fixed Borel function such that $\Bar{A}\subseteq_{\I}A_n$ for each $n\in\omega$.  Denote $\Bar{A}$ by $\{a_{n_k}:k\in\omega\}$. By passing to an $\I$-positive subset of $\Bar{A}$ we may assume that either of following cases is true.

    Case 1. For each $k\in\omega$, $i_{n_k}=0$. We claim that $\Bar{A}$ is $0$-homogeneous. Let $k\neq k'$. Choose $k''$ large enough such that $a_{n_{k''}}>\max\{a_{n_k},a_{n_{k'}}\}$ and $a_{n_{k''}}\in A_{n_{k}}\cap A_{n_{k'}}$. Then $c(a_{n_{k''}},a_{n_{k}})=c(a_{n_{k''}},a_{n_{k'}})=0$. By subadditivity we have that $c(a_{n_{k}},a_{n_{k'}})=0$.

     Case 2. For each $k\in\omega$, $i_{n_k}=1$. 
     Then for any $k$, $\{n \in \Bar{A} : c(n_k, n) =1\} \in (\I|A)^*$ and so for any $\I$-positive subset $B$ of $\Bar{A}$,  there exist $n,n'\in B$ such that $c(n,n')=1$. Thus $\Bar{A}$ is nowhere $0$-homogeneous in this case. 

     Let $H=\Bar{A}$. Then it is straightforward to check that the map which sends $(A,c)$ to $H$ (and sends irrelavent pairs to $\emptyset$, say) is analytic and hence Borel.
\end{proof}

Let $\I$ be an ideal on $X$ and $\i{J}$ be an ideal on $Y$. The
Fubini product $\I\bigoplus\i{J}$ is the ideal on $X\times Y$ defined by:
\begin{center}
    $A\in \I\bigoplus\i{J}\Leftrightarrow \{x\in X:(A)_x\notin \i{J}\}\in \I$
\end{center}
where $(A)_x=\{y\in Y:(x,y)\in A\}$. 

 The ideal $\fin^{\alpha}$ is defined recursively on $\alpha\in\omega_1$ as follows:

 \begin{itemize}
     \item $\fin^1=\fin$;
     \item Suppose $\fin^{\alpha}$, an ideal on $X^{\alpha}$, has already been defined. Define  $\fin^{\alpha+1}$  on $\omega\times X^{\alpha}$ by:
     \begin{center}
         $A\in\fin^{\alpha+1}$ if and only if $\{n\in\omega:(A)_n\notin \fin^{\alpha}\}\in\fin$;
     \end{center}
     \item Suppose that $\alpha$ is a limit ordinal and $\fin^\beta$ is defined for each $\beta<\alpha$. 
     Fix a sequence of strictly increasing ordinals $\{\alpha_n:n\in\omega\}$ such that $\alpha=\sup\{\alpha_n:n\in\omega\}$. 
     We define the ideal   $\fin^{\alpha}$ on $X^{\alpha}=\bigcup\limits_{n\in\omega}\{n\}\times X^{\alpha_n}$ by
\begin{center}
   $A\in\fin^{\alpha}$ if and only if $\{n\in\omega:A\cap (\{n\}\times X^{\alpha_n})\notin \fin^{\alpha_n}\}\in\fin$.
\end{center}
     
 \end{itemize}

\begin{lemma}\label{these two class of ideals are both uwP+}
    Let $\I$ be an ideal on $\omega$ which is $F_{\sigma}$ or $\fin^{\alpha}$ for some $\alpha<\omega_{1}$. Then $\I$ is uniformly weakly $P^+$.
\end{lemma}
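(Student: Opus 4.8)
The plan is to prove the statement in two independent parts, since the class of ideals splits naturally into the $F_\sigma$ case and the $\fin^\alpha$ case. In both cases I must produce a Borel function $\Phi$ that, given a $\subseteq$-decreasing sequence $\langle A_n : n \in \omega\rangle$ of $\I$-positive sets, returns a single $\I$-positive set $\bar A$ with $\bar A \subseteq_\I A_n$ for every $n$. The key observation is that uniform weak $P^+$ness is really about extracting a pseudo-intersection \emph{definably}, so the whole game is to make the standard diagonalization arguments explicit and Borel.

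\smallskip

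For the $F_\sigma$ case I would use the submeasure characterization quoted in Section~2: there is a lower semicontinuous submeasure $\phi$ with $\I = \{I : \phi(I) < \infty\}$, so $A \in \I^+$ exactly when $\phi(A) = \infty$. Given the decreasing sequence, the idea is to build $\bar A$ as a union $\bigcup_n F_n$ of finite pieces, where $F_n$ is chosen inside $A_n$ (so that $\bar A \setminus A_n \subseteq F_0 \cup \cdots \cup F_{n-1}$ is finite, giving $\bar A \subseteq_\I A_n$ automatically, indeed $\bar A \subseteq_{\fin} A_n$) and with $\phi(F_n)$ large, say $\phi(F_n) \geq n$ or $\geq 2^n$. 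Because each $A_n$ is $\I$-positive, $\phi(A_n) = \infty$, and by lower semicontinuity (axiom~(4)) there is always an \emph{initial segment} $A_n \cap m$ of large enough $\phi$-value; choosing the least such $m$ and taking $F_n = A_n \cap m \setminus (\text{previously used integers})$ makes the construction canonical and hence Borel in the input sequence. I would then verify $\phi(\bar A) = \infty$, i.e. $\bar A \in \I^+$, which follows since $\bar A$ contains finite chunks of unboundedly large submeasure. The main thing to check is that subtracting off the bounded initial segments does not destroy the submeasure growth; this is where one must be slightly careful, but monotonicity and subadditivity of $\phi$ handle it.

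\smallskip

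For the $\fin^\alpha$ case I would proceed by transfinite induction on $\alpha < \omega_1$, the point being that the uniform $P^+$-witness at stage $\alpha$ is assembled from the witnesses at the smaller stages appearing in the recursive definition of $\fin^\alpha$. For $\alpha = 1$ the ideal is $\fin$ itself and a decreasing sequence of infinite sets has an infinite pseudo-intersection computed by the obvious Borel diagonalization. For successor and limit stages, an $\I$-positive set $A$ is, by unfolding the definition, one whose ``$\omega$-many fibers'' are $\fin^{\alpha_n}$-positive on an infinite (resp.\ cofinite-complement) set of coordinates; given the decreasing sequence $\langle A_n\rangle$ I would, for infinitely many base-coordinates $k$, apply the inductive Borel witness to the induced decreasing sequence of fibers over $k$ to get a positive pseudo-intersection in that fiber, and then glue these fiberwise outputs together. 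Checking that the glued set is $\fin^\alpha$-positive reduces to verifying positivity fiberwise, which is exactly what the inductive hypothesis supplies, and the resulting $\Phi$ is Borel because it is built by composition and countable recombination of Borel functions.

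\smallskip

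\emph{The main obstacle} I anticipate is uniformity (Borel-ness) rather than mere existence of the pseudo-intersection, which is classical in both cases. Concretely, in the limit case of $\fin^\alpha$ the recursion runs through a fixed sequence $\alpha_n \nearrow \alpha$, and I must ensure that the transfinite induction produces a \emph{single} Borel function and not merely, for each $\alpha$, some Borel function with no control over complexity as $\alpha$ grows; one typically handles this by fixing, once and for all, the choice sequences in the definition of $\fin^\alpha$ and verifying that each inductive step applies a Borel operation uniformly to the outputs of the previous stage. A secondary subtlety is to confirm that the $\subseteq_\I$ (rather than $\subseteq_{\fin}$) relation is what the gluing delivers at higher $\fin^\alpha$ stages, since discarding finitely many coordinates in the base does not in general discard only an $\I$-small set higher up — but tracking positivity fiberwise keeps this under control. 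I expect the $F_\sigma$ argument to be essentially self-contained via the submeasure, and the $\fin^\alpha$ argument to be the longer of the two because of the bookkeeping in the transfinite recursion.
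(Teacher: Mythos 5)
Your $F_\sigma$ argument is correct and is essentially the paper's: take shortest finite initial segments $s_k$ of $A_k$ with $\phi(s_k)$ large (lower semicontinuity supplies these, since $A_k\in\ijia$ means $\phi(A_k)=\infty$), and let $\bar A=\bigcup_k s_k$; then $\bar A\setminus A_n$ is finite and $\phi(\bar A)=\infty$. (The paper does not even remove previously used integers, since $\bar A\setminus A_n$ is finite regardless; your removal step is harmless provided, as you indicate, the threshold at stage $n$ is chosen large relative to the submeasure of the part already used.) The base case $\I=\fin$ is likewise fine.

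The $\fin^\alpha$ half, however, has a genuine gap. You propose, for infinitely many base coordinates $k$, to apply the inductive witness to ``the induced decreasing sequence of fibers over $k$'', i.e.\ to $\langle (A_n)_k : n\in\omega\rangle$. But this is a legitimate input for the inductive witness only if \emph{every} $(A_n)_k$ is $\fin^{\alpha}$-positive, and there may be no such $k$ at all: positivity of each $A_n$ only says that $P_n=\{j\in\omega:(A_n)_j\notin\fin^{\alpha}\}$ is infinite, and $\bigcap_n P_n$ can be empty. Concretely, for $\fin^2$ on $\omega\times\omega$ take $A_n=\{(j,y):j\ge n\}$: this is a decreasing sequence of positive sets in which every column is eventually empty, so every fiberwise application of the inductive witness receives an illegitimate input and its output is meaningless. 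The missing idea is a diagonalization across the two indices, which is exactly what the paper does: choose strictly increasing $j_k\in P_k$ (possible since each $P_k$ is infinite) and set $\bar A=\bigcup_k\{j_k\}\times(A_k)_{j_k}$, taking \emph{whole} fibers. Then $\bar A$ is positive because it has infinitely many positive fibers, and for each $n$, $\bar A\setminus A_n\subseteq\bigcup_{k<n}\{j_k\}\times(A_k)_{j_k}$ is supported on finitely many columns and hence lies in $\fin^{\alpha+1}$. This last observation also disposes of your ``secondary subtlety'': a set meeting only finitely many columns is always in $\fin^{\alpha+1}$, no matter how large its fibers, so discarding finitely many base coordinates \emph{does} only discard an $\I$-small set. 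In particular, no recursion into the fibers (indeed no use of the inductive hypothesis at all) is needed at successor stages, and the same diagonal selection of blocks handles limit stages.
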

\begin{proof}
   In each of the following cases, we first fix $A\in\ijia$ and $\{A_n:n\in\omega\}$  a $\subseteq$-decreasing sequence of $\I$-positive subsets of $A$.  Then we will find some $\Bar{A}\in\ijia\cap\i{P}(A)$ such that $\Bar{A}\subseteq_{\I}A_n$ for each $n\in\omega$.  It is straightforward to check that the map which sends 
   $(A_n)_{n\in\omega}$
    to $\Bar{A}$ (and sends all irrelevant pairs to $\emptyset$) is Borel. 
    
    The case of $\I=\fin^{\alpha}$ is proved recursively on $\alpha\in\omega_1$. 
    
    Case 1.  $\I=\fin$.  A standard diagonal argument yields a pseudointersection $\Bar{A}$ of $\{A_n:n\in\omega\}$. 
    
    Case 2. $\I=\fin^{\alpha+1}$ for some $\alpha<\omega_1$. Denote $P_j=\{j\in\omega:(A_n)_j\notin\fin^\alpha\}$. Then $\{P_j:j\in\omega\}$ is $\subseteq$-decreasing. Define \begin{itemize}
        \item $j_0=\min P_0$;
        \item $j_{k+1}=\min\{j\in P_{k+1}:j>j_{k}\}$ ($k\in\omega$).
    \end{itemize}

Define $\Bar{A}=\cup\{(A_k)_{j_k}:k\in\omega\}$. 

    Case 3. The proof of the case of $\I=\fin^\alpha$ for limits ordinals $\alpha$ is similar and thus is omitted. 

   Case 4. $\I=\fin(\phi)$ for some lower semicontinuous submeasure on $\omega$.  Notice that $\I$ is coded by the real $\phi|_{\bt}$ since $\phi$ is lower semicontinuous.  Define 
    \begin{itemize}
        \item $s_0$ to be the shortest initial segment of $A_{0}$ such that $\phi(s_0)>1$ and
        \item $s_{k+1}$ to be the shortest initial segment of $A_{k+1}$ such that $\phi(s_{k+1})>k+2$ and
    \end{itemize}
   finally we define $\Bar{A}=\cup\{s_k:k\in\omega\}$. 
\end{proof}
\begin{coro}\label{exist borel I-med}
    Let $\I$ be an ideal on $\omega$ which is $F_{\sigma}$ or $\fin^{\alpha}$ for some $\alpha<\omega_{1}$. Then there exists a Borel $\I$-MED family.
\end{coro}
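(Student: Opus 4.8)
The plan is simply to assemble the three results established above into a single chain of implications; the corollary carries essentially no independent content of its own.

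First I would invoke Lemma~\ref{these two class of ideals are both uwP+} to conclude that any ideal $\I$ which is $F_\sigma$ or equal to $\fin^\alpha$ for some $\alpha<\omega_1$ is uniformly weakly $P^+$. To feed this into Lemma~\ref{uwP+ imply uwR}, whose hypothesis demands that $\I$ be Borel, I would next record that every ideal in either class is indeed Borel. For $F_\sigma$ ideals this is immediate from the definition. For $\fin^\alpha$ it follows by transfinite induction on $\alpha$: the base case $\fin$ is $F_\sigma$ (the finite sets are exactly the eventually-zero points of $2^\omega$); the map $A\mapsto\{x:(A)_x\notin\J\}$ is Borel whenever $\J$ is, so the Fubini product of Borel ideals is Borel, which handles the successor stages; and the limit stage is analogous, using that the defining condition quantifies only over the $\omega$-many sections determined by the fixed cofinal sequence $\{\alpha_n\}$. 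Thus each $\fin^\alpha$ with $\alpha<\omega_1$ is Borel, and Lemma~\ref{uwP+ imply uwR} then yields that $\I$ is uniformly weakly Ramsey.

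Finally, since a Borel set is in particular Baire measurable, both hypotheses of Theorem~\ref{uwR imply exist Borel I-med} are met, and the theorem delivers the desired Borel $\I$-MED family.

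The only point deserving any attention is the Borel-ness of $\fin^\alpha$, and even this is a minor obstacle: the induction is routine, but one should note that although the Borel rank of $\fin^\alpha$ grows with $\alpha$, the ideal remains Borel at every countable stage, because each $X^{\alpha_n}$ is a fixed countable set and the recursion introduces only a bounded increase in complexity at each step. No part of the argument requires a new idea beyond what Lemmas~\ref{uwP+ imply uwR} and~\ref{these two class of ideals are both uwP+} and Theorem~\ref{uwR imply exist Borel I-med} already provide.
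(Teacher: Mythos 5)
Your proof is correct and follows exactly the paper's route: the paper's own proof is the one-line ``Combine Lemma~\ref{these two class of ideals are both uwP+}, Lemma~\ref{uwP+ imply uwR} and Theorem~\ref{uwR imply exist Borel I-med}.'' Your additional verification that $F_\sigma$ ideals and each $\fin^{\alpha}$ are Borel (hence Baire measurable) is a correct and welcome filling-in of a hypothesis the paper leaves implicit, but it does not change the argument.
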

\begin{proof}
    Combine Lemma \ref{these two class of ideals are both uwP+},\ref{uwP+ imply uwR} and Theorem \ref{uwR imply exist Borel I-med}.
\end{proof}

\section{Reducing complexity}

In this section we present a procedure that reduces Borel complexity of any $\I$-MED family to $\mathbf{\PI}_{1}^{0}$. 

\begin{lemma}\label{partition omega to A,C}
    Let $\I$ be an ideal on $\omega$ such that $\fin\subseteq\I$. Then there exist $A,C\in\is$ such that $A=\omega\setminus C$ and $\I|A$ is isomorphic to $\I$.
\end{lemma}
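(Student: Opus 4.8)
The plan is to split into two cases according to whether $\I$ contains an infinite set, the point being that the only obstruction lives in a single degenerate case. If $\I = \fin$ the statement is essentially trivial: take $A$ to be the even numbers and $C$ the odd numbers. Then $A = \omega \setminus C$, both are in $\is$, and \emph{any} bijection $f \colon A \to \omega$ witnesses $\fin \res A \cong \fin$, since an injection preserves finiteness, so $f^{-1}(Y)$ is finite exactly when $Y$ is. So from now on I would assume $\I \supsetneq \fin$ and fix a genuinely infinite set $W \in \I$ (such a $W$ exists precisely because $\I \neq \fin$); note that since $\I$ is downward closed we have $\powerset(W) \subseteq \I$.

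Next I would use $W$ as a ``reservoir''. Split $W = W_A \sqcup W_C$ into two infinite pieces, and set $C = W_C$ and $A = \omega \setminus C = (\omega \setminus W) \cup W_A$. Both $A$ and $C$ are infinite (indeed $A \supseteq W_A$ and $C = W_C$) and $A = \omega \setminus C$, as required. I would then define the candidate isomorphism $f \colon A \to \omega$ to be the identity on $\omega \setminus W$ and an arbitrary bijection $\rho \colon W_A \to W$ on $W_A$. Because $(\omega \setminus W) \sqcup W = \omega$ and the two parts of $f$ have disjoint ranges, $f$ is a bijection of $A$ onto $\omega$.

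The key step is checking that $f$ witnesses $\I \res A \cong \I$, and this reduces entirely to the observation that, since $\powerset(W) \subseteq \I$, membership in $\I$ of a subset of $A$ (or of $\omega$) depends only on its trace on $\omega \setminus W$: for $S \subseteq A$ one has $S \in \I$ iff $S \cap (\omega \setminus W) \in \I$, because $S \cap W_A \subseteq W \in \I$. Since $f$ fixes $\omega \setminus W$ pointwise and sends $W_A$ into $W$, we get $f[S] \cap (\omega \setminus W) = S \cap (\omega \setminus W)$ and $f[S] \cap W \subseteq W \in \I$, whence $f[S] \in \I$ iff $S \in \I$. Taking $S = f^{-1}(Y)$ gives $f^{-1}(Y) \in \I \res A$ iff $Y \in \I$ for every $Y \subseteq \omega$, which is exactly the isomorphism condition.

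The main obstacle is conceptual rather than computational: one cannot preserve a general $\I$ by a ``near-identity'' bijection nor by the increasing enumeration of an arbitrarily chosen co-infinite $A$, since for an arbitrary ideal these maps ignore its structure. What makes the argument work is choosing the discarded set $C$ \emph{inside} an infinite member $W$ of $\I$, so that both the removed part $C = W_C$ and the reinserted copy $W_A$ are $\I$-negligible; this $\I$-negligibility is precisely what renders $W$ and $W_A$ interchangeable and the map ideal-preserving. The ideal $\fin$ is the unique ideal containing $\fin$ with no infinite member, which is exactly why it must be separated out and handled by the trivial even/odd split.
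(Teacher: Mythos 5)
Your proof is correct, but it takes a genuinely different (and more uniform) route than the paper. The paper splits into \emph{three} cases: (1) $\I=\fin$, handled by the even/odd split; (2) $\I$ isomorphic to the direct sum of $\fin$ and $\powerset(\omega)$, handled by interleaving enumerations of the two parts; and (3) everything else, where the paper takes an \emph{entire} infinite set $C\in\I$ as the discarded piece, and then must find a second infinite $B\in\I$ inside $A=\omega\setminus C$ to absorb the copy of $\omega\setminus(A\setminus B)$ --- the existence of such a $B$ is exactly what fails for $\fin\oplus\powerset(\omega)$, which is why the paper needs case (2) separately. Your trick of choosing the discarded set $C=W_C$ to be only \emph{half} of an infinite $W\in\I$, keeping the other half $W_A$ inside $A$ as the reservoir, guarantees automatically that $A$ contains an infinite member of $\I$, so the paper's exceptional case never arises and is silently absorbed into your general argument. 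The underlying mechanism is the same in both general cases (fix the $\I$-essential part $\omega\setminus W$, respectively $A\setminus B$, pointwise, and shuffle an $\I$-negligible set onto an $\I$-negligible set, which cannot change membership in $\I$), but your choice of where to carve out $C$ buys a two-case proof in place of a three-case one. Each step you state checks out: $f$ is a bijection of $A=(\omega\setminus W)\sqcup W_A$ onto $(\omega\setminus W)\sqcup W=\omega$, and since membership in $\I$ of any set is unchanged by deleting its intersection with $W$, the computation $f[S]\cap(\omega\setminus W)=S\cap(\omega\setminus W)$ does give $S\in\I\Leftrightarrow f[S]\in\I$ for all $S\subseteq A$, which is the isomorphism condition.
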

    \begin{proof}
Case (1).  $\I=\fin$. Let $A$ be the even numbers and $C$ be the odd numbers. Define $f:\omega\rightarrow A$ to be the increasing enumeration of $A$. Then clearly $f$ is an isomorphism between   $\I$ and $\I|A$.

Case (2).  $\I$ is isomorphic to $\fin\bigoplus \i{P}(\omega)$. Let $\Omega_{1},\Omega_{2}\in\is$ such that $\Omega_{2}=\omega\setminus\Omega_{1}$ and $\I|\Omega_{1}$ is isomorphic to $\i{P}(\omega)$ and $\I|\Omega_{2}$ is isomorphic to $\fin$. Enumerate  $\Omega_{1}$ by $\{a_{k}\}$ and $\Omega_{2}$ by $\{b_{i}\}$. Define   $A=\{a_{2k}:k\in\omega\}\cup\{b_{2i}:i\in\omega\}$ and $C=\omega\setminus A$. Then $\I|A$ is isomorphic to $\I$.

Case (3).  Neither $\fin$ nor $\fin\bigoplus \i{P}(\omega)$ is isomorphic to $\I$. We use an idea in  \cite[Proposition 1.2(b)]{kwela2017homogeneous}. Since $\I$ is not isomorphic to $\fin$, there exists $C\in\I\cap\is$. Define $A=\omega\setminus C$. Then $A\in\I\xing$. There exists $B\subseteq A$ such that $B\in\is\cap \I$ (otherwise $\I|A$ is isomorphic to $\fin$ and $\I|C$ is isomorphic to $\i{P}(\omega)$ which contradicts with our hypothesis). Note that $A\setminus B\in\I\xing$. Define $f:A\rightarrow \omega$ such that $f$ is identity map on $A\setminus B$ and $f$ maps $B$ 1-1 onto $\omega\setminus (A\setminus B)$.
Then $f$ is an isomorphic between   $\I|A$ and $\I$. 
\end{proof}

\begin{theorem}\label{borel to closed}
   Let $\I$ be an ideal on $\omega$ such that $\fin\subseteq\I$.  If there is a Borel $ \mathcal I$-MED family, then there is a closed  $\mathcal I$-MED family.
\end{theorem}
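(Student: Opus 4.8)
The plan is to realise the given Borel family $\i E\subseteq\bs$ as a continuous injective image of a closed set, and then re-encode each member so that membership in the resulting family becomes a $\PI^0_1$ condition, using Lemma~\ref{partition omega to A,C} to reserve enough space for the code. Concretely, since $\i E$ is Borel, I would invoke the standard representation theorem (see \cite{kechris2012classical}) that every Borel subset of a Polish space is the injective continuous image of a closed subset of Baire space: fix a closed set $F\subseteq\bs$ and a continuous injection $\pi\colon F\to\bs$ with $\pi[F]=\i E$. Next, apply Lemma~\ref{partition omega to A,C} to obtain a partition $\omega=A\sqcup C$ with $A,C\in\is$ together with a bijection $\varphi\colon A\to\omega$ witnessing $\I|A\cong\I$ (so for $Y\subseteq\omega$, $Y\in\I$ iff $\varphi^{-1}(Y)\in\I$), and enumerate $C=\{c_n:n\in\omega\}$ increasingly.

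The key definition: for $p\in F$ let $H_p\in\bs$ be given by
\[
H_p(a)=\pi(p)(\varphi(a))\quad(a\in A),\qquad H_p(c_n)=\sharp(p|_n)\quad(n\in\omega),
\]
i.e.\ the ``real'' eventually-different content of $\pi(p)$ is transported onto $A$ via $\varphi$, while the branch-code $e(p)$ is written verbatim onto $C$. Put $\i E'=\{H_p:p\in F\}$. The intuition for the two halves is that $A$ carries the MED structure (so maximality and $\I$-ED on $A$ reduce to the corresponding facts for $\i E$ under the isomorphism $\varphi$), whereas $C$ carries a rigid, continuously decodable copy of the code $p$ that pins down which element we are looking at.

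I would then verify three things. For $\I$-ED: if $p\neq p'$ then on $A$ the agreement set is $\varphi^{-1}\big(S(\omega,\pi(p),\pi(p'))\big)$, which lies in $\I$ because $\pi(p)\neq\pi(p')$ are distinct members of the $\I$-ED family $\i E$ (here injectivity of $\pi$ is essential, else the two would agree on all of $A\notin\I$); on $C$ the codes $\sharp(p|_n)$ and $\sharp(p'|_n)$ differ for all large $n$, so the agreement set is finite. Hence $S(\omega,H_p,H_{p'})\in\I$. For maximality: given $h\in\bs$, apply $\I$-maximality of $\i E$ to $\hat h=h\circ\varphi^{-1}$ to get $g\in\i E$ with $S(\omega,\hat h,g)\notin\I$; then $H_{\pi^{-1}(g)}$ agrees with $h$ on the set $\varphi^{-1}\big(S(\omega,\hat h,g)\big)\subseteq A$, which is not in $\I$. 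Finally, closedness: $h\in\i E'$ holds exactly when (a) for every $n$ the value $h(c_n)$ is $\sharp$ of a sequence of length $n$ and these sequences cohere to a single branch $p\in\bs$, (b) $p\in F$, and (c) $h(a)=\pi(p)(\varphi(a))$ for all $a\in A$. Condition (a) is an intersection of clopen conditions and makes $p$ a continuous function of $h$ on the resulting closed set; (b) is closed because $F$ is closed; and (c) is closed because $\pi$ is continuous, so each $h\mapsto h(a)$ and $h\mapsto\pi(p)(\varphi(a))$ is a continuous integer-valued function and their agreement set is (relatively) closed. Thus $\i E'$ is a closed $\I$-MED family.

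I expect the main obstacle to be in arranging closedness simultaneously with the $\I$-ED property on the ``code'' side $C$: because $C$ need not belong to $\I$, it is not enough to dump arbitrary distinguishing data on $C$; the data must make distinct codes eventually different on $C$ (not merely $\I$-different), yet still be continuously decodable so that membership is genuinely closed. Writing the branch function $e(p)$ (rather than $p$ itself) onto $C$ is what reconciles these demands, and checking that the decoding map $h\mapsto p$ is continuous and total on the relevant closed set---so that condition (c) is closed---is the point that requires the most care.
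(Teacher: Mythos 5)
Your proposal is correct and follows essentially the same route as the paper's proof: both split $\omega=A\sqcup C$ via Lemma~\ref{partition omega to A,C}, transport the actual member of the Borel family $\i E$ onto $A$ through the isomorphism, write a coherent code for a point of a closed-set representation of $\i E$ onto $C$, and then verify that the ED property, maximality, and closedness each reduce to the corresponding facts. The only (immaterial) difference is that you invoke the continuous-injection form of the Borel representation theorem, where the paper uses the equivalent unique-witness form (\cite[13.10]{kechris2012classical}), so its code on $C$ is a pair $(y|_n{}^\frown z|_n)$ from the closed graph $B$ rather than your single branch $p|_n$ from the closed set $F$.
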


\begin{proof}   
  Let   $\mathcal E$ be a Borel $ \mathcal I$-MED family.  By theorem \cite[page 84, 13.10]{kechris2012classical}, there is a closed set $B\subseteq \omega^\omega \times \omega^\omega$ such that $\forall x\in\bs$,
  \[  x\in \mathcal E \Leftrightarrow \exists z\in \omega^\omega (x, z)\in B\Leftrightarrow \exists ! z\in \omega^\omega (x, z)\in B.\]
Notice that $B$ is thus a graph of some function on $\i{E}$. 

By Lemma \ref{partition omega to A,C},  there exist $A,C\in\is$ and bijection $\pi: A\to\omega  $ such that $A=\omega\setminus C$ and $\pi|A$ witnesses that  $\I|A$ is isomorphic to $\I$. 

Fix a bijection $\sharp:\bt\rightarrow\omega$. Define $G\subseteq\bs$ such that  $x\in G$ if and only if the followings hold:

\begin{itemize}
    \item $\forall n\in C [\exists  s_n, t_n\in \omega^{n}(x(n)=\sharp (s_n^{\frown}t_n))
    ]$;
    \item $~\forall m,n\in C[(n< m)\rightarrow( s_n\subseteq s_m~\&~ t_n\subseteq t_m)].$
\end{itemize}
Define $F: G\to \omega^\omega \times \omega^\omega$ by $x\mapsto (\bigcup_{n} s_n,\bigcup_n t_n)=(F_0(x), F_1(x))$ where $s_n, t_n$ are as above. It is straightforward to check $G$ is closed and $F$ is continuous.
Define \[\mathcal E^*=\{ x\in G: \forall k\in A (x(k)=  F_0(x)(\pi(k) ) ~\&~ F(x)\in  B\}\]
It is clear that $\mathcal E^*$ is closed.

We claim that $\mathcal E^*$ is an $\I$-MED family.    

(i) Let $x\not=y\in \i{E}^*$. We will show that $\{n: x(n)=y(n)\}\in \I$. 
 Since $x,y\in G$, we have that $F(x),F(y)\in B$.  So $F_0(x), F_0(y)\in \mathcal E$. Notice that $F_0(x)\not= F_0(y)$: Otherwise, $F_0(x)= F_0(y)$. Then we have that   $F_1(x)=F_1(y)$,  since   $B$ is a graph of some function.  By definition of $G$, we have that $x| C=y|C$. But, by definition of $\i{E}^*$, we also have $x|A=y|A$. So $x=y$, a contradiction. 

  Since $F_0(x)\not=F_0(y)\in \mathcal E$ and $\i{E}$ is $\I$-eventually different, we have that $\{n: F_0(x)(n)=F_0(y)(n)\} \in \I$.  By the isomorphism between  $\I|A$ and $\I$,  we have that $\{k\in A: x(k)=y(k)\}\in \I$.  For each $n\in C$ we $x(n)=\sharp (F_0(x)|n^\frown F_1(x)|n)$ and $y(n)=\sharp (F_0(y)|n^\frown F_1(y)|n)$. So $x|C$ and $y|C$ are $\fin$-eventually different.   Hence we have that $\{n\in \omega: x(n)=y(n)\}\in \I$.

(ii) Now we show that $\i{E}^*$ is maximal.  Let $w\in \omega^\omega$,  define $w'$ by $w'(n)=w(\pi^{-1}(n))$.  Then there exists $y\in \mathcal E $ such that $\{n\in \omega: w'(n)=y(n)\}\not\in \I$ since $\i{E}$ is maximal. Let $z$  be $(y, z)\in B$.  Define $x\in \omega^\omega$ by $x(k)=y(\pi(k))$ if $k\in A$ and $x(k)=\sharp (y|k^\frown z|k)$ if $k\in C$.  Then $x\in G$ and $y=F_0(x)$ and $z=F_1(x)$.
By the isomorphism between  $\I|A$ and $\I$, we have that 
$\{k\in A:w(k)=x(k)\}=\pi^{-1}(\{n\in\omega:w'(n)=y(n)\})\in\ijia$.
\end{proof}

\begin{coro}\label{c.F.sigma.fin.alpha}
    Let $\I$ be an ideal on $\omega$ which is $F_{\sigma}$, or suppose $\mathcal I = \fin^{\alpha}$ for some $\alpha<\omega_{1}$. Then there exists a closed $\I$-MED family.
\end{coro}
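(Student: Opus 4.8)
The plan is to observe that this corollary is an immediate consequence of two results already established in the paper, namely Corollary~\ref{exist borel I-med} and Theorem~\ref{borel to closed}. No new construction is needed; the entire content is a chaining of these two implications, together with the standing assumption that all ideals under consideration include $\fin$.

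First I would invoke Corollary~\ref{exist borel I-med}: since $\I$ is either $F_\sigma$ or isomorphic to $\fin^{\alpha}$ for some $\alpha<\omega_1$, that corollary directly yields the existence of a Borel $\I$-MED family. (Recall that Corollary~\ref{exist borel I-med} was itself obtained by combining Lemma~\ref{these two class of ideals are both uwP+}, Lemma~\ref{uwP+ imply uwR}, and Theorem~\ref{uwR imply exist Borel I-med}, so all the genuine work has already been done upstream.)

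Next I would feed this Borel $\I$-MED family into Theorem~\ref{borel to closed}. The hypothesis of that theorem is precisely that $\fin\subseteq\I$ and that a Borel $\I$-MED family exists. The former holds by our blanket convention on ideals (stated in Section~2, that every ideal considered includes $\fin$), and the latter is exactly what the previous step supplied. Thus Theorem~\ref{borel to closed} produces a closed $\I$-MED family, which is the desired conclusion.

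I do not expect any real obstacle here, since the argument is a pure composition of black boxes. The only thing to double-check is that the two cited results genuinely apply to \emph{both} branches of the disjunction in the statement ($F_\sigma$ ideals and $\fin^\alpha$), and that the containment $\fin\subseteq\I$ needed by Theorem~\ref{borel to closed} is available in each case; both points are immediate. In short, the proof reads: \emph{Combine Corollary~\ref{exist borel I-med} with Theorem~\ref{borel to closed}.}
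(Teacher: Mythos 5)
Your proof is correct and is exactly the paper's own argument: the paper's proof of this corollary literally reads ``Combine Corollary~\ref{exist borel I-med} and Theorem~\ref{borel to closed}.'' Your additional remarks---verifying $\fin\subseteq\I$ via the blanket convention and checking both branches of the disjunction---are just the routine details implicit in that one-line proof.
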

\begin{proof}
    Combine Corollary \ref{exist borel I-med} and Theorem \ref{borel to closed}.
\end{proof}

\bibliographystyle{amsplain}
\bibliography{HangBib}

\providecommand{\bysame}{\leavevmode\hbox to3em{\hrulefill}\thinspace}
\providecommand{\MR}{\relax\ifhmode\unskip\space\fi MR }
\providecommand{\MRhref}[2]{%
  \href{http://www.ams.org/mathscinet-getitem?mr=#1}{#2}
}
\providecommand{\href}[2]{#2}
\begin{thebibliography}{10}

\bibitem{bartoszynski1995set}
Tomek Bartoszynski and Haim Judah, \emph{Set {Theory}: {On} the structure of
  the real line}, CRC Press, 1995.

\bibitem{brendle2013some}
J{\"o}rg Brendle, \emph{Some problems concerning mad families}, Proceedings of
  the {RIMS} {C}onference: {F}orcing extensions and large cardinals
  \textbf{1851} (2013), 1--13.

\bibitem{horowitz2024borel}
Haim Horowitz and Saharon Shelah, \emph{A {Borel} maximal eventually different
  family}, Annals of Pure and Applied Logic \textbf{175} (2024), no.~1, 103334.

\bibitem{hruvsak2011comparison}
Michael Hru{\v{s}}{\'a}k and David Meza-Alc{\'a}ntara, \emph{Comparison game on
  borel ideals}, Commentationes Mathematicae Universitatis Carolinae
  \textbf{52} (2011), no.~2, 191--204.

\bibitem{hruvsak2017ramsey}
Michael Hru{\v{s}}{\'a}k, David Meza-Alc{\'a}ntara, Egbert Th{\"u}mmel, and
  Carlos Uzc{\'a}tegui, \emph{Ramsey type properties of ideals}, Annals of Pure
  and Applied Logic \textbf{168} (2017), no.~11, 2022--2049.

\bibitem{jalali-naini1976monotone}
Seyed-Assadollah Jalali-Naini, \emph{The monotone subsets of {Cantor} space,
  filters and descriptive set theory}, Ph.D. thesis, University of Oxford,
  1976.

\bibitem{kastermans2008analytic}
Bart Kastermans, Juris Stepr{\=a}ns, and Yi~Zhang, \emph{Analytic and
  coanalytic families of almost disjoint functions}, The Journal of Symbolic
  Logic \textbf{73} (2008), no.~4, 1158--1172.

\bibitem{kechris2012classical}
Alexander Kechris, \emph{Classical descriptive set theory}, vol. 156,
  Springer-Verlag, 1995.

\bibitem{kwela2017homogeneous}
Adam Kwela and Jacek Tryba, \emph{Homogeneous ideals on countable sets}, Acta
  Mathematica Hungarica \textbf{151} (2017), no.~1, 139--161.

\bibitem{laflamme1996filter}
Claude Laflamme, \emph{Filter games and combinatorial properties of
  strategies}, Contemp. Math \textbf{192} (1996), 51--67.

\bibitem{laflamme2002filter}
Claude Laflamme and Christopher~C. Leary, \emph{Filter games on $\omega$ and
  the dual ideal}, Fund. Math \textbf{173} (2002), no.~2, 159--173.

\bibitem{mathias1975remark}
Adrian R.~D. Mathias, \emph{A remark on rare filters}, Infinite and finite sets
  \textbf{3} (1975), 1095--1097.

\bibitem{mathias1977happy}
\bysame, \emph{Happy families}, Annals of Mathematical logic \textbf{12}
  (1977), no.~1, 59--111.

\bibitem{mazur1991f_sigma}
Krzysztof Mazur, \emph{{$F_{\sigma}$}-ideals and
  $\omega_{1}\omega_{1}^{*}$-gaps in the {B}oolean algebras {$P(\omega)/I$}},
  Fundamenta Mathematicae \textbf{138} (1991), no.~2, 103--111.

\bibitem{schrittesser2017horowitz}
David Schrittesser, \emph{On {H}orowitz and {S}helah's {B}orel maximal
  eventually different family}, Proceedings of the 2016 {RIMS} {Conference}:
  {Set} theory and infinity, vol. 2164, RIMS, 2017, pp.~99--105.

\bibitem{schrittesser2018compactness}
David Schrittesser, \emph{Compactness of maximal eventually different
  families}, Bulletin of the London Mathematical Society \textbf{50} (2018),
  no.~2, 340--348.

\bibitem{talagrand1980compacts}
Michel Talagrand, \emph{Compacts de fonctions mesurables et filtres non
  mesurables}, Studia Mathematica \textbf{67} (1980), no.~1, 13--43.

\bibitem{tornquist2018definability}
Asger T{\"o}rnquist, \emph{Definability and almost disjoint families}, Advances
  in Mathematics \textbf{330} (2018), 61--73.

\end{thebibliography}

\end{document}